\newtheorem{algorithm}{Algorithm}
\newtheorem{theorem}{Theorem}
\newtheorem{lemma}{Lemma}
\newtheorem{remark}{Remark}
\newtheorem{property}{Property}
\DeclareMathOperator{\RN}{RN}
\DeclareMathOperator{\fl}{float}
\title{\bf On the maximum relative error when computing $x^n$ in floating-point arithmetic}
\author{Stef Graillat\\Université Pierre et Marie Curie Paris 6 \\ Laboratoire LIP6\\ \and Vincent Lefèvre\\ Inria, Laboratoire LIP\\Université de Lyon \and Jean-Michel Muller\\ CNRS, Laboratoire LIP\\Université de Lyon}
\begin{document}

\maketitle

\begin{abstract}
In this paper, we improve the usual relative error bound for the computation of $x^n$
through iterated multiplications by $x$ in binary floating-point
arithmetic. The obtained error bound is only slightly better than the
usual one, but it is simpler. We also discuss the more general problem
of computing the product of $n$ terms. 
\end{abstract}

\textbf{Keywords:}  floating-point arithmetic, rounding error,
accurate error bound, exponentiation \\

\textbf{AMS Subject Classifications:} 15-04, 65G99, 65-04 \\

\section{Introduction}

\subsection{Floating-point arithmetic and rounding errors}

In general, computations in floating-point arithmetic are not errorless: a small rounding error occurs  each time an arithmetic operation is performed. Depending on the calculation being done, the global influence of these individual rounding errors can rank anywhere between completely negligible and overwhelming. Hence, it is always important to have some information on the numerical quality of a computed result. Furthermore, when critical applications are at stake, one may need \emph{certain} yet \emph{tight} error bounds. The manipulation of these error bounds (either paper-and-pencil manipulation or---if one wishes to do some dynamical error analysis---numerical manipulation) will also be made easier if these bound are \emph{simple}.

In the following, we assume a radix-$2$, precision-$p$, floating-point (FP) arithmetic. To simplify the presentation, we assume an unbounded exponent range: our results will be applicable to ``real life'' floating-point systems, such as those that are compliant with the IEEE 754-2008 Standard for Floating-Point Arithmetic~\cite{IEEE754-2008,MullerEtAl2010}, provided that no underflow or overflow occurs. In such an arithmetic, a floating-point number is either zero or a number of the form
\[
x = X \cdot 2^{e_x-p+1},
\]
where $X$ and $e_x$ are integers, with $2^{p-1} \leq  |X| \leq 2^p-1$. The number $X$ is called the \emph{integral significand} of $x$, $X \cdot{} 2^{-p+1}$ is called the \emph{significand} of $x$, and $e_x$ is called the \emph{exponent} of $x$.

As said above, since in general the sum, product, quotient, etc., of two FP numbers is not a FP number, it must be \emph{rounded}. The IEEE 754-2008 Standard requires that the arithmetic operations should be \emph{correctly rounded}: a rounding function must be chosen among five possible functions defined by the standard. If $\circ{}$ is the rounding function, when the arithmetic operation $(a \top b)$ is performed, the value that must be returned is the FP number $\circ{}(a \top b)$. The default rounding function is \emph{round to nearest ties to even}, denoted $\RN_{even}$, defined as follows: 
\begin{itemize}
     \item[($i$)] for all FP numbers $y$, $|\RN_{even}(t) -t| \leq |y-t|$;
     \item[($ii$)] if there are two FP numbers that satisfy ($i$), $\RN_{even}(t)$ is the one whose integral significand is even.
\end{itemize}
The IEEE 754-2008 standard defines another round-to-nearest rounding function, namely \emph{round to nearest ties to away}, where ($ii$) is replaced by
\begin{itemize}
     \item[($ii'$)] if there are two FP numbers that satisfy ($i$), $\RN_{away}(t)$ is the one whose integral significand has the largest magnitude.
\end{itemize}
In the following, $\RN$ is one of these two round-to-nearest functions. More precisely: unless stated otherwise, the bounds we give are applicable to both rounding functions. However, when we build examples (for instance for checking how tight are the obtained bounds), we use $\RN_{even}$.

%
%
%

Recently, classic error bounds for summation and dot product have been
improved by  Jeannerod and Rump~\cite{Rump2012,JR2013}. They  have considered the problem of calculating the sum of $n$ FP numbers $x_1, x_2, \ldots{}, x_n$. If we call $\fl( \sum_{i=1}^n x_i )$ the computed result and $u = 2^{-p}$ the \emph{rounding unit}, they have 
shown that 
\begin{equation}
\left| \fl\left( \sum_{i=1}^n x_i \right) - \sum_{i=1}^n x_i \right| \leq (n-1)
\cdot u \sum_{i=1}^n |x_i|
\end{equation}
which is better than the previous bound~\cite[p.63]{Hig02}
\[
\left| \fl\left( \sum_{i=1}^n x_i \right) - \sum_{i=1}^n x_i \right| \leq \gamma_{n-1} \sum_{i=1}^n |x_i|
\]
where 
\begin{equation}
\label{defgamma}
\gamma_n = \frac{n \cdot u}{1 - n \cdot u} = n \cdot u + n^2 \cdot u^2 + n^3 \cdot u^3 + \cdots{} = n \cdot u + \mathcal{O}(u^2).
\end{equation}

We are interested in finding if a similar simplification is possible in  the particular case of the computation of an integer power $x^n$, that is we wish to know if the result computed using the ``naive algorithm'' (Algorithm~\ref{alg:naive-power} below) is always within relative error $(n-1) \cdot{}u$ from the exact result.
This is ``experimentally true'' in binary32/single precision arithmetic. More precisely, we did an exhaustive check for all $x \in [1;2[$ in
binary32 ($2^{23}$ numbers to be checked) until overflow for $x^n$.
For the smallest number larger than $1$, namely $x = 1+ 2 u$, $n \approx 7.5 \times 10^8$ is needed to reach overflow.
Our test used a $100$-bit interval arithmetic  provided by the
MPFI~\cite{MPFI} package. 

In this paper, we  prove---under mild hypotheses---that this result holds for all ``reasonable'' floating-point formats (we need the precision $p$ to be larger than or equal to $5$, which is always true in practice).

%

%
%

\subsection{Relative error due to roundings}
\label{sec:relative-error-classical-general}

Let $t$ be a positive real number between $2^e$ and $2^{e+1}$, where
$e \in \mathbb{Z}$. The rounding $\RN(t)$ is between $2^e$ and $2^{e+1}$ too, and we have
\begin{equation}
\label{eq:simplest-rounding-error}
|\RN(t) - t| \leq 2^{e-p}.
\end{equation}
From this, we easily deduce a bound on the relative error due to rounding $t$
\begin{equation}
\label{eq:relative-error-bound-rounding-general}
\left|\frac{\RN(t)-t}{t}\right| \leq 2^{-p} = u.
\end{equation}

This is illustrated by Figure~\ref{fig:illustration-u}.

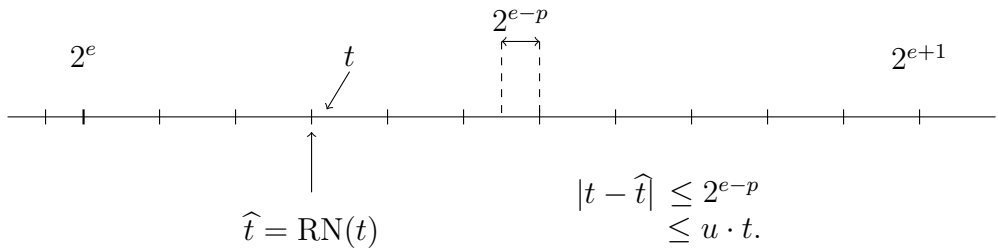
\begin{figure}[htb]
\begin{tikzpicture}
\draw(0,0)--(13,0);
\draw(0.5,-0.1)--(0.5,+0.1);
\draw[thick](1,-0.1)--(1,+0.1);
\foreach\i in {2,3,...,12}{\draw(\i,-0.1)--(\i,+0.1);}
\draw[<->](6.5,1)--(7,1);
\draw[dashed](6.5,1)--(6.5,0);
\draw[dashed](7,1)--(7,0);
\node(halfulp)at(6.75,1.3){$2^{e-p}$};
\node(2e)at(1,0.8){$2^e$};
\node(2ep1)at(12,0.8){$2^{e+1}$};
\node(hatt)at(4,-1.5){$\widehat{t} = \RN(t)$};
\node(t)at(4.5,0.8){$t$};
\draw[->](4.5,0.6)--(4.2,0.1);
\draw[->](4,-1)--(4,-0.2);
\node(equation1)at(8,-1){$|t-\widehat{t}|$};
\node(equation2)at(9.3,-1){$ \leq 2^{e-p}$};
\node(equation3)at(9.3,-1.5){$\leq u \cdot t.$};
\end{tikzpicture}
\caption{\emph{In precision-$p$ binary floating-point arithmetic, in the normal range, the relative error due to rounding to nearest is always bounded by $u = 2^{-p}$.}}
\label{fig:illustration-u}
\end{figure}

For instance, when we perform a floating-point multiplication, if $a$ and $b$ are the input FP operands, $z = ab$ is the exact result, and $\widehat{z} = \RN(z)$ is the computed result, then we have

\begin{equation}
\label{error-one-multiplication}
(1-u) \cdot{} z \leq \widehat{z} \leq (1+u) \cdot z.
\end{equation}

Assume that we wish to evaluate the product
\[
a_1 \cdot{} a_2 \cdots{}  a_n,
\]
of $n$ floating-point numbers, and that the product is evaluated as
\begin{equation}\label{eq:prod}
\RN( \cdots{} \RN(\RN(a_1 \cdot{} a_2) \cdot{} a_3) \cdot{} \cdots{} ) \cdot{} a_n).
\end{equation}
Define $\pi_n$ as the exact value of $a_1 \cdots a_n$, and $\widehat{\pi}_n$ as the computed value. A simple induction, based on (\ref{error-one-multiplication}), allows one to show
\begin{theorem}
\label{thm:classical-theorem}
Let $a_1,\ldots, a_n$ be floating-point numbers, $\pi_n = a_1 \cdots
a_n$, and $\widehat{\pi}_n$ the computed value using
\eqref{eq:prod}. Then we have
\begin{equation}
\label{eq:classical-thm}
(1 - u)^{n-1} \pi_n \leq \widehat{\pi}_n \leq (1+u)^{n-1} \pi_n.
\end{equation}
\end{theorem}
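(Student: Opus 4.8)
The plan is to prove Theorem~\ref{thm:classical-theorem} by a straightforward induction on $n$, after a preliminary reduction to the case of positive operands. First I would dispose of the degenerate situation: if some $a_i$ vanishes, then $\pi_n = \widehat{\pi}_n = 0$ and \eqref{eq:classical-thm} holds trivially, so we may assume every $a_i \neq 0$. Next, since both round-to-nearest functions are odd, $\RN(-t) = -\RN(t)$, replacing each $a_i$ by $|a_i|$ multiplies both $\pi_n$ and $\widehat{\pi}_n$ by the same sign factor $\prod_i \mathrm{sign}(a_i)$; hence it suffices to prove the inequalities when $a_1,\ldots,a_n$ are all positive, in which case every exact partial product $\pi_k$ and every computed intermediate value occurring in \eqref{eq:prod} is positive too. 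This reduction is what lets us chain the one-sided inequalities of \eqref{error-one-multiplication} without worrying about directions flipping.

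For the induction itself, the base case $n=1$ is immediate ($\widehat{\pi}_1 = a_1 = \pi_1$ and $(1\pm u)^0 = 1$), and the case $n=2$ is literally \eqref{error-one-multiplication}. For the inductive step, assume the statement for $n-1$, that is $(1-u)^{n-2}\pi_{n-1} \leq \widehat{\pi}_{n-1} \leq (1+u)^{n-2}\pi_{n-1}$. By construction $\widehat{\pi}_n = \RN(\widehat{\pi}_{n-1}\cdot a_n)$, where $\widehat{\pi}_{n-1}$ is a floating-point number (being the output of a rounding) and $a_n$ is a floating-point number by hypothesis, so \eqref{error-one-multiplication} applies to their product and gives $(1-u)\,\widehat{\pi}_{n-1}a_n \leq \widehat{\pi}_n \leq (1+u)\,\widehat{\pi}_{n-1}a_n$. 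Multiplying the induction hypothesis by $a_n>0$ yields $(1-u)^{n-2}\pi_n \leq \widehat{\pi}_{n-1}a_n \leq (1+u)^{n-2}\pi_n$ (using $\pi_n = \pi_{n-1}a_n$), and combining the two chains, together with $1\pm u>0$, produces $(1-u)^{n-1}\pi_n \leq \widehat{\pi}_n \leq (1+u)^{n-1}\pi_n$, closing the induction.

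I do not expect a genuine obstacle here — this is exactly the ``simple induction'' mentioned just before the statement. The only points needing a little care are the sign reduction above and the observation that each fed-back partial result is genuinely a floating-point number, so that \eqref{error-one-multiplication} is legitimately applicable at each step. It is worth emphasizing, for what follows, that the resulting factor $(1+u)^{n-1} = 1 + (n-1)u + \mathcal{O}(u^2)$ is known not to be optimal; replacing it, in the special case $a_1 = \cdots = a_n = x$, by the simpler bound $1 + (n-1)u$ is precisely the aim of the rest of the paper.
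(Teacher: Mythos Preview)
Your proof is correct and follows exactly the ``simple induction, based on~(\ref{error-one-multiplication})'' that the paper invokes without spelling out; you have merely supplied the routine details (the degenerate zero case, the sign reduction via the oddness of $\RN$, and the observation that each intermediate is a genuine FP number). Nothing further is needed.
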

See~\cite{GR09} for some results concerning the computation of the
product of floating-point numbers.
Therefore, the relative error of the computation, namely $|\widehat{\pi_n}-\pi_n|/\pi_n$ is upper-bounded by
\[
\psi_{n-1} = (1+u)^{n-1}-1.
\]
One easily shows that, as long as $ku < 1$ (which always holds in practical cases),
\[
k \cdot u \leq \psi_k \leq \gamma_k,
\]
where $\gamma_k$ is defined by \eqref{defgamma}.
Although the bound $\psi_{n-1}$ on the relative error of the computation of $a_1 \cdot{} a_2 \cdots a_n$ is very slightly\footnote{As long as $nu$ is small enough in front of $1$.}  better than $\gamma_{n-1}$, the classical bound found in the literature is $\gamma_{n-1}$. The reason for this is that it is easier to manipulate in calculations.

And yet, in all our experiments, we observed a relative error less than $(n-1) \cdot u$. If we could prove that this is a valid bound, this would be even easier to manipulate. In the general case of an iterated product, we did not succeed in proving that. We could only automatically build cases, for each value of the precision $p$, for which the attained relative error is extremely close to, yet not larger than, $(n-1) \cdot u$ (see Section~\ref{sec:iterated-products}). However, in the particular case $n \leq 4$, one can prove that the relative error is less than $(n-1) \cdot u$. This is done as follows.

First, as noticed by Jeannerod and Rump~\cite{JeannerodRump2014}, one may remark that the bound on the relative error due to rounding---i.e., (\ref{eq:relative-error-bound-rounding-general})---can be slightly improved. Assume that $t$ is a real number between $2^e$ and $2^{e+1}$. We already know that $|t-\RN(t)| \leq 2^{e-p} = u \cdot{} 2^e$. Therefore:
\begin{itemize}
    \item if $t \geq 2^e \cdot{} (1+u)$, then $|t-\RN(t)|/t \leq u/(1+u)$;
    \item if $t <  2^e \cdot{} (1+u)$, then $\RN(t) = 2^e$. Let $t = 2^e \cdot (1 + \tau \cdot u)$, we have:
    $|t-\RN(t)|/t = \tau \cdot u / (1 + \tau \cdot u)$. An elementary study shows that for $\tau \in [0,1)$, $\tau \cdot u / (1 + \tau \cdot u) < u / (1+u)$.
\end{itemize}
Therefore the maximum relative error due to rounding is bounded\footnote{Incidentally, if $\RN = \RN_{even}$, that error is attained when $t = 1+u$, which shows that the bound cannot be improved further. } by $u/(1+u)$.
A consequence of this is that  $u$ can be replaced by $u/(1+u)$ in
(\ref{eq:classical-thm}). This is illustrated by Figure~
\ref{fig:wobbling-relative-error} (see p.~\pageref{fig:wobbling-relative-error}). In the general case (that is, for any $n$), this improvement does not suffice to show Theorem~\ref{thm:main-theorem}, and yet, when $n \leq 4$, we can use the following result.

\begin{property}
\label{property-use-of-u-over-1-plus-u}
If $k \leq 3$ then
\[
\left(1 + \frac{u}{1+u}\right)^k < 1 + k \cdot{} u.
\]
\end{property}
\begin{proof}
The  simplest way to prove Property~\ref{property-use-of-u-over-1-plus-u} is to separately consider the cases $k = 1, 2,$ and $3$:
\begin{itemize}
    \item the case $k=1$ is straightforward:
     \item if $k=2$, we have
         \[
\left(1 + \frac{u}{1+u}\right)^2 - (1 + 2u) = - \frac{u^2 \cdot{} (1+2u)}{(1+u)^2} < 0;
\]
     \item if $k=3$, we have
\[
\left(1 + \frac{u}{1+u}\right)^3 - (1 + 3u) = -\frac{u^3 \cdot (3u+2)}{(1+u)^3} < 0.
\]
\end{itemize}
\end{proof}
By taking $k = n-1$, we immediately deduce that for $n \leq 4$, the relative error of the iterative product of $n$ FP numbers is bounded by $(n-1) \cdot u$.

Although we conjecture that this remains true for larger values of $n$, we did not succeed in proving that (notice that Property~\ref{property-use-of-u-over-1-plus-u} is no longer true when $k \geq 4$).
However, in the particular case of the computation of $x^n$, for some given FP number $x$ and some positive integer $n$, we could prove the bound $(n-1) \cdot u$: our main result is Theorem~\ref{thm:main-theorem} below.

\subsection{The particular case of computing powers}

In the following, we are interested in computing $x^n$, where $x$ is a FP number and $n$ is an integer. It is not difficult to show by induction that the bound provided by Theorem~\ref{thm:classical-theorem} applies not only to the case that was discussed above (computation of $\RN( \cdots{} \RN(\RN(x \cdot{} x) \cdot{} x) \cdot{} \cdots{} ) \cdot{}x$) but to the larger class of recursive algorithms where the approximation to $x^{k+\ell}$ is deduced from  approximations to $x^k$ and $x^{\ell}$ by a FP multiplication. However, we will prove a (slightly) better bound only in the case where the algorithm used for computing $x^n$ is Algorithm~\ref{alg:naive-power} below.

\begin{samepage}
\begin{algorithm}[naive-power$(x,n)$]~\\[-0.5cm]
\label{alg:naive-power}
  \begin{algorithmic}[0] 
\STATE{$y \gets x$}
\FOR{$k = 2$ to $n$}
  \STATE{$y \gets \RN(x \cdot y)$}
\ENDFOR
\RETURN{y}
\end{algorithmic}
\end{algorithm}
\end{samepage}
We will define $\widehat{x}_j$ as the value of variable $y$ after the iteration corresponding to $k=j$ in the \textbf{for} loop of Algorithm~\ref{alg:naive-power}. We have $\widehat{x}_2 = \RN(x^2)$, and $\widehat{x}_k = \RN(x \cdot \widehat{x}_{k-1})$.
We wish to prove

\begin{theorem}
\label{thm:main-theorem}
Assume $p \geq 5$ (which holds in all practical cases). If \[n \leq \sqrt{2^{1/2}-1} \cdot 2^{p/2},\] then
\[
\left| \widehat{x}_n - x^n \right| \leq (n-1) \cdot u \cdot x^n.
\]
\end{theorem}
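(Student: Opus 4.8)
I would begin by reducing to the case $x\in(1,2)$: the rounding function commutes with multiplication by a power of two and both sides of the claimed inequality are unchanged under $x\mapsto 2^ex$, so it suffices to treat $x\in(1,2)$ (the value $x=1$ being computed exactly). Small values of $n$ are already covered by Property~\ref{property-use-of-u-over-1-plus-u}, so $n$ may be taken as large as convenient. Next I would pass from the relative error to an additive identity: writing $\eta_k=\RN(x\widehat{x}_{k-1})-x\widehat{x}_{k-1}$ for the rounding error of the $k$-th multiplication, so that $\widehat{x}_k=x\widehat{x}_{k-1}+\eta_k$, unrolling the recurrence gives the exact identity
\[
\widehat{x}_n-x^n=\sum_{k=2}^{n}x^{\,n-k}\,\eta_k .
\]
If $2^{e_k}$ denotes the binade of the product $x\widehat{x}_{k-1}$, i.e.\ $2^{e_k}\le x\widehat{x}_{k-1}<2^{e_k+1}$, then $|\eta_k|\le u\cdot 2^{e_k}$, so the whole statement reduces to proving
\[
S:=\sum_{k=2}^{n}\frac{2^{e_k}}{x^{k}}\ \le\ n-1 .
\]

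The crux is to understand $S$. Because $x<2$, one multiplication raises the binade exponent by $0$ or $1$, so the $e_k$ are non-decreasing with $e_k-e_{k-1}\in\{0,1\}$; I will call $k$ a \emph{doubling step} when the exponent actually increases. Set $r_k=\widehat{x}_k/x^k$ (so $r_1=1$) and $\phi_k=x\widehat{x}_{k-1}/2^{e_k}-1\in[0,1)$, so that $2^{e_k}/x^k=r_{k-1}/(1+\phi_k)$. A first, classical induction (Theorem~\ref{thm:classical-theorem} with the Jeannerod–Rump bound $u/(1+u)$ on the per-rounding relative error) gives $r_k\le\bigl(1+u/(1+u)\bigr)^{k-1}$, which under the hypothesis $n^2u\le 2^{1/2}-1$ equals $1+(k-1)u$ plus a remainder whose sum over $k$ is controlled. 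The geometric fact I would lean on is that if step $k$ is \emph{not} a doubling step then $\widehat{x}_{k-1}\ge 2^{e_{k-1}}=2^{e_k}$, hence $x\widehat{x}_{k-1}\ge x\cdot 2^{e_k}$ and so $\phi_k\ge x-1$; consequently $2^{e_k}/x^k\le r_{k-1}/x$ on every non-doubling step, a genuine gain over the trivial bound $r_{k-1}$ whenever $x$ is bounded away from $1$.

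With these ingredients I would argue by cases on the size of $x^n$, equivalently on how many doubling steps occur. If $x^n$ is small enough that every product $x\widehat{x}_{k-1}$ ($2\le k\le n$) stays below $2$ — in particular if $x^n\le 2(1-nu)$ — then every $e_k=0$ and $S=\sum_{k=2}^n x^{-k}\le (n-1)x^{-2}<n-1$, with room to spare. In the remaining range, where $x^n\gtrsim 2$, I would split $S=\sum_{k\text{ doubling}}r_{k-1}/(1+\phi_k)+\sum_{k\text{ non-doubling}}r_{k-1}/(1+\phi_k)$: the overshoot of $\sum_{k=2}^n r_{k-1}$ above $n-1$ is at most $\binom{n-1}{2}u\le n^2u/2\le(2^{1/2}-1)/2$ up to a factor close to $1$, and this must be absorbed by the gains $r_{k-1}\bigl(1-1/(1+\phi_k)\bigr)$. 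For $x$ bounded away from $1$ and $2$, one single non-doubling step supplies a gain of order $x-1$, which already dominates $(2^{1/2}-1)/2$; for $x$ close to $1$ but with $x^n\ge 2$ there are $\approx n(1-\log_2 x)$ non-doubling steps, each with $\phi_k\ge x-1\gtrsim(\log 2)/n$, so their combined gain is of order $\log 2$; and for $x$ close to $2$ essentially every step doubles, but there the iterates can be tracked more explicitly — $r_k-1$ stays of size $O(u)$ while on doubling steps $\phi_k$ is typically of order $1$ when $x$ is near $2$, so the factors $1/(1+\phi_k)$ more than compensate and $S$ is in fact far below $n-1$. Collecting the three sub-cases gives $S\le n-1$ throughout.

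The routine parts are the scaling reduction, the additive error expansion, and the easy case $x^n<2$. The real difficulty — and the place where the precise hypotheses $n\le\sqrt{2^{1/2}-1}\cdot 2^{p/2}$ and $p\ge 5$ are genuinely used — is the balancing act in the third case: one must show that the gains harvested from the binade positions $\phi_k$ (equivalently, that the roundings cannot all conspire to inflate $r_k$) always beat the $O(n^2u^2)$ loss, and the three sub-regimes ($x$ near $1$, $x$ near $2$, $x$ in between) each require their own slightly delicate estimate. A mere appeal to the improved per-step bound $u/(1+u)$ is provably not enough here, in keeping with the failure of Property~\ref{property-use-of-u-over-1-plus-u} for $k\ge 4$.
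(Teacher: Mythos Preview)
Your approach is genuinely different from the paper's. The paper does not use an additive expansion at all; it argues multiplicatively. The key observation (Remarks~\ref{rem:rounding-downwards} and~\ref{rem:significand-large-enough}) is that it suffices to exhibit \emph{one} step $k$ at which either the rounding is downward or $\overline{x\widehat{x}_{k-1}} \geq 1 + n^2u$, since in either case the overall factor is at most $(1+u)^{n-2}\bigl(1+u/(1+n^2u)\bigr) \leq 1+(n-1)u$ by Lemma~\ref{lemma:n:u}. The case analysis is then designed to produce such a step: for $x$ close to $1$ the first squaring already rounds down (Lemma~\ref{lemma:smallx}); otherwise one tracks the first index at which a binade boundary is crossed from the appropriate side and shows that an adjacent step has $\overline{x\widehat{x}_{k-1}} \geq 1+n^2u$. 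So the paper trades your cumulative ``gains versus overshoot'' bookkeeping for a single existence argument backed by one clean inequality.

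Your framework is sound --- the expansion $\widehat{x}_n-x^n=\sum_{k=2}^n x^{n-k}\eta_k$, the reduction to $S\le n-1$, and the lower bound $\phi_k\ge x-1$ on non-doubling steps are all correct --- but the proposal stops short of a proof. The three sub-regimes are treated only heuristically (``typically of order $1$'', ``of order $\log 2$'', ``each require their own slightly delicate estimate''), with no thresholds specified between them and no verification that the gains actually dominate the overshoot in each. You also never show where the specific constant in the hypothesis on $n$, or the condition $p\ge 5$, enter your estimates; in the paper these emerge concretely, the former from forcing $(2+2n^2u)^{n/2}(1+u)^{n-2}<2^{n-1}$ and the latter from locating the root of an explicit auxiliary function. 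Until the quantitative work in each sub-regime is carried out and the cases are shown to cover all of $(1,2)$, what you have is a plausible strategy rather than a proof.
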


To prove Theorem~\ref{thm:main-theorem}, it suffices to prove it in the case $1 \leq x < 2$: in the following we will therefore assume that $x$ lies in that range.

We prove Theorem~\ref{thm:main-theorem} in Section~\ref{sec:proof}. Before that, in Section \ref{sec:prem}, we give some preliminary results. In Section \ref{sec:thm}, we discuss
the tightness of our new bound.  Section \ref{sec:prod} is devoted to a
discussion on the possible generalization of this bound to the product
of $n$ floating-point numbers.

\section{Preliminary results} \label{sec:prem}

In this section we give some preliminary results that will help to
improve the bound of Theorem~\ref{thm:classical-theorem} in the
specific case of the computation of integer powers. Let us start with an easy
remark. 

\begin{remark}
Since $(1-u)^{n-1} \geq 1 - (n-1) \cdot u$ for all $n \geq 2$ and $u \in [0,1]$, the left-hand bound of (\ref{eq:classical-thm}) suffices to show that
    $(1 - (n-1) \cdot u) \cdot x^n \leq \widehat{x}_n$. In other words, to establish Theorem~\ref{thm:main-theorem}, we only need to improve on the right-hand bound of (\ref{eq:classical-thm}).
\end{remark}

Now, for $t \neq 0$, define
\[
\overline{t} = \frac{t}{2^{\lfloor \log_2 |t| \rfloor}}.
\]

We have,
\begin{lemma}
\label{lemma:significand-above-w}
Let $t$ be a real number. If
\begin{equation}
\label{eq:signif-bounded-by-w}
2^e \leq w \cdot 2^e \leq |t| < 2^{e+1}, e \in \mathbb{Z}
\end{equation}
(in other words, if $|\overline{t}|$ is lower-bounded by w)
then
\[
\left| \frac{\RN(t) - t}{t} \right| \leq \frac{u}{w}.
\]
\end{lemma}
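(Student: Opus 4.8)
The plan is to establish Lemma~\ref{lemma:significand-above-w} as a direct refinement of inequality~\eqref{eq:simplest-rounding-error}. The hypothesis~\eqref{eq:signif-bounded-by-w} tells us that $|t|$ lies in the binade $[2^e, 2^{e+1})$, so the absolute rounding error is controlled by the local unit in the last place: $|\RN(t) - t| \le 2^{e-p}$. This is exactly the content of~\eqref{eq:simplest-rounding-error}, which holds for either round-to-nearest function. The only additional ingredient is that the extra lower bound $|t| \ge w \cdot 2^e$ improves the \emph{relative} error: dividing the absolute bound by $|t|$ and using $|t| \ge w \cdot 2^e$ gives
\[
\left| \frac{\RN(t)-t}{t} \right| \le \frac{2^{e-p}}{|t|} \le \frac{2^{e-p}}{w \cdot 2^e} = \frac{2^{-p}}{w} = \frac{u}{w}.
\]

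So the proof is essentially a two-line computation: quote~\eqref{eq:simplest-rounding-error} for the numerator, then substitute the worst case $|t| = w \cdot 2^e$ in the denominator. I would first note that without loss of generality $t > 0$ (both $\RN$ and the quantities involved are odd functions of $t$, or one can simply replace $t$ by $|t|$ throughout), and that $e = \lfloor \log_2 |t| \rfloor$ so that $|\overline{t}| \in [1,2)$ and $|\overline t| \ge w$ is the reformulation given in the statement. Then I would invoke~\eqref{eq:simplest-rounding-error}, divide, and bound the denominator below.

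There is no real obstacle here; the one point worth a sentence is making sure the bound~\eqref{eq:simplest-rounding-error} is applied in the correct binade. Since $w \ge 1$ we have $|t| \ge 2^e$, and the hypothesis also gives $|t| < 2^{e+1}$, so $2^e \le |t| < 2^{e+1}$ and $e = \lfloor \log_2 |t|\rfloor$ indeed, which is precisely the situation in which~\eqref{eq:simplest-rounding-error} was derived. (The edge case $w = 1$ just recovers the classical bound~\eqref{eq:relative-error-bound-rounding-general}.) One should also observe that the inequality is an equality-attaining bound only up to the rounding step, so no tie-breaking subtlety enters: the estimate $|\RN(t)-t|\le 2^{e-p}$ is valid regardless of whether $\RN$ is $\RN_{even}$ or $\RN_{away}$, which is why the lemma, like the other bounds in the paper, applies to both. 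That completes the argument.
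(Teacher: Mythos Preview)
Your proof is correct and matches the paper's own treatment: the paper simply states that Lemma~\ref{lemma:significand-above-w} is an immediate consequence of~\eqref{eq:simplest-rounding-error} and~\eqref{eq:signif-bounded-by-w}, which is precisely the two-line computation you carry out. Your additional remarks (reduction to $t>0$, verification that $e=\lfloor\log_2|t|\rfloor$, independence of the tie-breaking rule) are sound and just make explicit what the paper leaves implicit.
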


Figure~\ref{fig:wobbling-relative-error} illustrates Lemma~\ref{lemma:significand-above-w}, and Figure~\ref{fig:wobbling-2} illustrates this ``wobbling'' maximal relative error due to rounding.

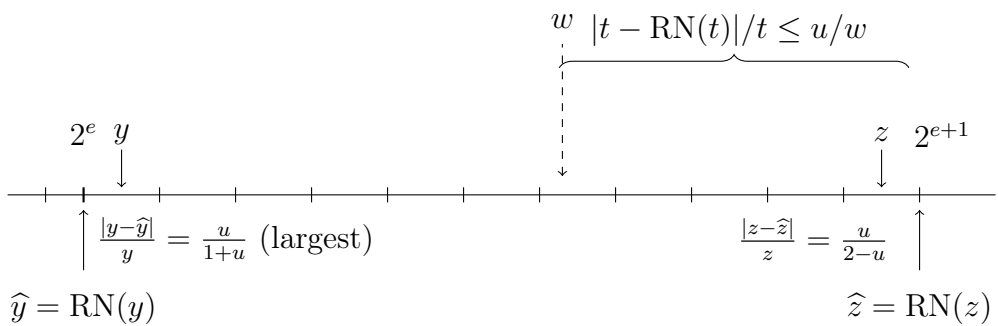
\begin{figure}[htb]
\begin{tikzpicture}
\draw(0,0)--(13,0);
\draw(0.5,-0.1)--(0.5,+0.1);
\draw[thick](1,-0.1)--(1,+0.1);
\foreach\i in {2,3,...,12}{\draw(\i,-0.1)--(\i,+0.1);}
\node(2e)at(1,0.8){$2^e$};
\node(2ep1)at(12.3,0.8){$2^{e+1}$};
\node(hatt)at(1,-1.5){$\widehat{y} = \RN(y)$};
\node(y)at(1.5,0.8){$y$};
\draw[->](1.5,0.6)--(1.5,0.1);
\draw[->](1,-1)--(1,-0.2);
\draw[dashed,<-](7.3,0.25)--(7.3,2.0);
\node(w)at(7.3,2.3){$w$};
\draw[decorate,decoration={brace,amplitude=5pt},xshift=-4pt,yshift=-9pt]
(7.4,2) -- (12,2) node [black,midway,above=4pt,xshift=-2pt] {$|t-\RN(t)|/t \leq u/w$};
\node(equation1)at(3,-0.6){$\frac{|y-\widehat{y}|}{y} = \frac{u}{1+u}$ (largest)};
\node(hatz)at(12,-1.5){$\widehat{z} = \RN(z)$};
\node(t)at(11.5,0.8){$z$};
\draw[->](11.5,0.6)--(11.5,0.1);
\draw[->](12,-1)--(12,-0.2);
\node(equation2)at(10.6,-0.6){$\frac{|z-\widehat{z}|}{z} = \frac{u}{2-u}$};
\end{tikzpicture}
\caption{\emph{The bound on the relative error due to rounding to nearest can be reduced to $u/(1+u)$. Furthermore, if we know that $\overline{t} = t/2^e$ is larger than $w$, then $|\RN(t)-t|/t$ is less than $u/w$.}}
\label{fig:wobbling-relative-error} 
\end{figure}

\begin{figure}[htb]
\includegraphics[scale=0.75]{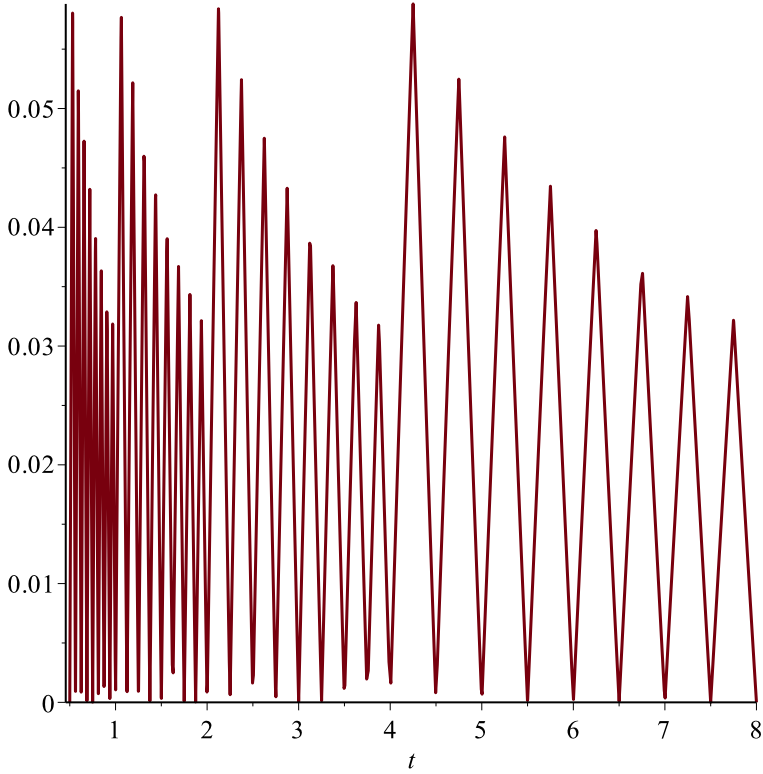}
\caption{The relative error due to rounding, namely $|\RN(t)-t|/t$, for $t$ between $1/5$ and $8$.}
\label{fig:wobbling-2} 
\end{figure}


%

Lemma~\ref{lemma:significand-above-w} is an immediate consequence of (\ref{eq:simplest-rounding-error}) and (\ref{eq:signif-bounded-by-w}). It is at the heart of our study: our problem will be to show that at least once in the execution of Algorithm~\ref{alg:naive-power} the number $x \cdot y$ is such that $\overline{x \cdot y}$ is large enough to sufficiently reduce the error bound on the corresponding FP multiplication $y \gets \RN(x \cdot y)$, so that the overall relative error bound becomes smaller than $(n-1) \cdot{} u$. More precisely, we will show that, under some conditions, at least once, $\overline{x \cdot y}$ is larger than $1+n^2u$, so that in (\ref{eq:classical-thm}) the term $(1+u)^{n-1}$ can be replaced by
\[
(1+u)^{n-2} \cdot \left(1 + \frac{u}{1+n^2u}\right).
\]
Therefore, we need to bound this last quantity. We have,

\begin{lemma}
\label{lemma:n:u}
If $0 \leq u \leq 2/(3 n^2)$ then
\begin{equation}
\label{eq:lemma:n:u}
(1+u)^{n-2} \cdot \left(1 + \frac{u}{1+n^2u}\right) \leq 1 + (n-1) \cdot u.
\end{equation}
\end{lemma}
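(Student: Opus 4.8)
The plan is to prove this by passing to logarithms, turning the product on the left of \eqref{eq:lemma:n:u} into a sum that can be compared against an estimate of $\ln\bigl(1+(n-1)u\bigr)$. We may assume $u>0$ (for $u=0$ both sides equal $1$) and $n\ge2$ (when $n\le1$ the left-hand side is $(1+u)^{-1}\!\left(1+\frac{u}{1+u}\right)=\frac{1+2u}{(1+u)^2}\le1$, so the bound is immediate). The first step replaces each factor on the left by an exponential: since $n-2\ge0$ and $\ln(1+u)\le u$ we have $(1+u)^{n-2}\le e^{(n-2)u}$, and trivially $1+\frac{u}{1+n^2u}\le\exp\!\bigl(\tfrac{u}{1+n^2u}\bigr)$, so the left-hand side of \eqref{eq:lemma:n:u} is at most $\exp\!\bigl((n-2)u+\tfrac{u}{1+n^2u}\bigr)$. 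Since $1+(n-1)u=\exp\bigl(\ln(1+(n-1)u)\bigr)$ and $\exp$ is increasing, it then suffices to prove
\[
(n-2)u+\frac{u}{1+n^2u}\ \le\ \ln\bigl(1+(n-1)u\bigr).
\]

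The second step handles the right-hand side with the elementary inequality $\ln(1+x)\ge x-\tfrac12x^2$ (valid for $x\ge0$), applied with $x=(n-1)u$. After this it is enough to prove $(n-2)u+\frac{u}{1+n^2u}\le(n-1)u-\tfrac12(n-1)^2u^2$; cancelling $(n-2)u$ and dividing by $u>0$ this becomes $\frac{1}{1+n^2u}\le1-\tfrac12(n-1)^2u$, and multiplying through by $1+n^2u>0$ and simplifying reduces it to the polynomial inequality
\[
(n-1)^2\,n^2\,u\ \le\ n^2+2n-1 .
\]
Finally, feeding in the hypothesis $u\le\frac{2}{3n^2}$, it suffices that $\tfrac23(n-1)^2\le n^2+2n-1$, i.e. $n^2+10n-5\ge0$, which holds for all $n\ge1$; unwinding the reductions then yields \eqref{eq:lemma:n:u}.

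None of these steps is individually hard; the only real subtlety is that the estimates must be tight enough. The bound $\ln(1+x)\ge x-\tfrac12x^2$ is essentially the weakest that works here: replacing it by $\ln(1+x)\ge x-x^2$, or bounding $(1+u)^{n-2}$ by $\frac{1}{1-(n-2)u}$ rather than by $e^{(n-2)u}$, produces a condition on $n$ that fails once $n$ exceeds a small constant (roughly $4$ or $6$ respectively), whereas the argument above is uniform in $n$. A logarithm-free variant also works: \eqref{eq:lemma:n:u} is equivalent to $(1+u)^{n-1}-\bigl(1+(n-1)u\bigr)\le(1+u)^{n-2}\cdot\frac{n^2u^2}{1+n^2u}$, because $(1+u)-\bigl(1+\frac{u}{1+n^2u}\bigr)=\frac{n^2u^2}{1+n^2u}$; one then expands the left-hand side as $\sum_{k\ge2}\binom{n-1}{k}u^k$, bounds it above by $\binom{n-1}{2}u^2e^{(n-3)u}$ via $\binom{n-1}{k}\le\binom{n-1}{2}\frac{(n-3)^{k-2}}{(k-2)!}$, bounds the right-hand side below using $(1+u)^{n-2}\ge1$ and $1+n^2u\le\tfrac53$, and concludes with a similar quadratic-in-$n$ estimate.
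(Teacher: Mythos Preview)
Your proof is correct and takes a genuinely different—and much cleaner—route than the paper's. The paper clears denominators to form a polynomial $P(u)$, then bounds $(1+u)^{n-2}$ via a three-term Taylor estimate $\ln(1+u)\le u-\tfrac{u^2}{2}+\tfrac{u^3}{3}$ together with a quadratic exponential bound $e^t\le1+t+\tfrac35 t^2$ valid for $t\le\tfrac16$; after the substitution $a=n^2u$ this produces a thirteen-term rational expression $S(n,a)$, shown nonnegative by bounding each term individually for $n\ge3$ and $0\le a\le\tfrac23$. Your approach bypasses all of this machinery: you pass to logarithms, pair the crude one-term upper bound $\ln(1+u)\le u$ with the two-term lower bound $\ln(1+x)\ge x-\tfrac12x^2$, and arrive in a few lines at the single polynomial condition $(n-1)^2n^2u\le n^2+2n-1$, which the hypothesis $u\le\tfrac{2}{3n^2}$ reduces to $n^2+10n-5\ge0$. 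The paper's argument buys nothing that yours does not: both consume the full hypothesis $u\le\tfrac{2}{3n^2}$, and your version even works uniformly for all $n\ge2$ without the term-by-term case analysis. (One minor quibble: your dismissal of the case ``$n\le1$'' actually only treats $n=1$, not $n\le0$; but since the lemma is applied in the paper only for $n\ge2$, this is immaterial.)
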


\begin{proof}
Proving Lemma~\ref{lemma:n:u} reduces to proving that the polynomial
\[
P(u) = (1+(n-1)u)(1+n^2u)-(1+u)^{n-2}(1+n^2u+u)
\]
is $\geq 0$ for $0 \leq u \leq 2/(3n^2)$.

Notice that for $u \geq 0$, we have
\[
\ln(1+u) \leq u - \frac{u^2}{2} + \frac{u^3}{3}.
\]
From $\ln(1+u) \leq u$ we also deduce that $(n-2) \ln(1+u) \leq (n-2) u \leq 1/(2n)$.
For $0 \leq t \leq 1/6$, $e^t \leq 1 + t + \frac{3}{5} t^2$.
Therefore, for $0 \leq u \leq 2/3n^2$, to prove that $P(u) \geq 0$ it suffices to prove that
\begin{equation}
\label{reduc-1}
\begin{array}{c}
Q(n,u) = \left( 1+ \left( n-1 \right) u \right)  \left( {n}^{2}u+1 \right)\\
 -
 \left( 1+ \left( n-2 \right)  \left( u-1/2\,{u}^{2}+1/3\,{u}^{3}
 \right) +3/5\, \left( n-2 \right) ^{2} \left( u-1/2\,{u}^{2}+1/3\,{u}
^{3} \right) ^{2} \right)  \\
\times
\left( {n}^{2}u+u+1 \right) \geq 0.
\end{array}
\end{equation}
By defining $a = n^2u$, $Q(n,u) = R(n,a)$, with
\renewcommand{\arraystretch}{1.8}
\begin{equation}
\label{eq:def-R}
\begin{array}{c}
R(n,a) = -\frac{1}{5}\,{\frac {{a}^{2} \left( 3\,a-2 \right) }{{n}^{2}}}+\frac{1}{10}\,{\frac {
{a}^{2} \left( 29\,a+19 \right) }{{n}^{3}}}+\frac{1}{5}\,{\frac {{a}^{2}
 \left( 3\,{a}^{2}-17\,a-7 \right) }{{n}^{4}}} \\
 -\frac{1}{30}\,{\frac {{a}^{3}
 \left( 82\,a-5 \right) }{{n}^{5}}}-{\frac {1}{60}}\,{\frac {{a}^{3}
 \left( 33\,{a}^{2}-187\,a+20 \right) }{{n}^{6}}}+\frac{1}{15}\,{\frac {{a}^{4
} \left( 33\,a-8 \right) }{{n}^{7}}}\\
+{\frac {1}{60}}\,{\frac {{a}^{4}
 \left( 12\,{a}^{2}-153\,a+52 \right) }{{n}^{8}}}-\frac{1}{5}\,{\frac {{a}^{5}
 \left( 4\,a-7 \right) }{{n}^{9}}}-\frac{1}{15}\,{\frac {{a}^{5} \left( {a}^{2
}-14\,a+21 \right) }{{n}^{10}}}\\
+{\frac {4}{15}}\,{\frac {{a}^{6}
 \left( a-2 \right) }{{n}^{11}}}-\frac{1}{15}\,{\frac {{a}^{6} \left( 5\,a-8
 \right) }{{n}^{12}}}\\
 +{\frac {4}{15}}\,{\frac {{a}^{7}}{{n}^{13}}}-{
\frac {4}{15}}\,{\frac {{a}^{7}}{{n}^{14}}}
\end{array}
\end{equation}
Multiplying $R(n,a)$ by $5n^2/a^2$, we finally obtain
\begin{equation}
\label{eq:def-S}
\begin{array}{c}
S(n,a) = -3\,a+2+ \left( {\frac {29}{2}}\,a+\frac{19}{2} \right) {n}^{-1}+{\frac {3\,{a
}^{2}-17\,a-7}{{n}^{2}}}-\frac{1}{6}\,{\frac {a \left( 82\,a-5 \right) }{{n}^{
3}}} \\
-\frac{1}{12}\,{\frac {a \left( 33\,{a}^{2}-187\,a+20 \right) }{{n}^{4}}}+
\frac{1}{3}\,{\frac {{a}^{2} \left( 33\,a-8 \right) }{{n}^{5}}}+\frac{1}{12}\,{\frac {
{a}^{2} \left( 12\,{a}^{2}-153\,a+52 \right) }{{n}^{6}}} \\
-{\frac {{a}^{
3} \left( 4\,a-7 \right) }{{n}^{7}}}-\frac{1}{3}\,{\frac {{a}^{3} \left( {a}^{
2}-14\,a+21 \right) }{{n}^{8}}}+\frac{4}{3}\,{\frac {{a}^{4} \left( a-2
 \right) }{{n}^{9}}}-\frac{1}{3}\,{\frac {{a}^{4} \left( 5\,a-8 \right) }{{n}^
{10}}}\\+\frac{4}{3}\,{\frac {{a}^{5}}{{n}^{11}}}-\frac{4}{3}\,{\frac {{a}^{5}}{{n}^{12}
}}
\end{array}
\end{equation}
\renewcommand{\arraystretch}{1.0}
We wish to show that $S(n,a) \geq 0$ for $0 \leq a \leq 2/3$. Let us examine the terms of $S(n,a)$ separately. For $a$ in the interval $[0,2/3]$ and $n \geq 3$:
\begin{itemize}
    \item the term $-3\,a+2$ is always larger than $0$;
    \item the term $\left( {\frac {29}{2}}\,a+\frac{19}{2} \right) {n}^{-1}$ is always larger than $19/(2n)$;
    \item the term ${\frac {3\,{a}^{2}-17\,a-7}{{n}^{2}}}$ is always larger than $-6/n$;
    \item the term $-\frac{1}{6}\,{\frac {a \left( 82\,a-5 \right) }{{n}^{3}}}$ is always larger than $-7/(10n)$;
    \item the term $-\frac{1}{12}\,{\frac {a \left( 33\,{a}^{2}-187\,a+20 \right) }{{n}^{4}}}$ is always larger than $-17/(10000n)$;
    \item the term $\frac{1}{3}\,{\frac {{a}^{2} \left( 33\,a-8 \right) }{{n}^{5}}}$ is always larger than $-3/(10000n)$;
    \item the term $\frac{1}{12}\,{\frac {{a}^{2} \left( 12\,{a}^{2}-153\,a+52 \right) }{{n}^{6}}}$ is always larger than $-69/(10000n)$;
    \item the term $-{\frac {{a}^{3} \left( 4\,a-7 \right) }{{n}^{7}}}$ is always larger than $0$;
    \item the term $-\frac{1}{3}\,{\frac {{a}^{3} \left( {a}^{2}-14\,a+21 \right) }{{n}^{8}}}$ is always larger than $-6/(10000n)$;
    \item the term $\frac{4}{3}\,{\frac {{a}^{4} \left( a-2 \right) }{{n}^{9}}}$ is always larger than $-6/(100000n)$;
    \item  the term $-\frac{1}{3}\,{\frac {{a}^{4} \left( 5\,a-8 \right) }{{n}^{10}}}$ is always larger than $0$;
    \item the term $\frac{4}{3}\,{\frac {{a}^{5}}{{n}^{11}}}$  is always larger than $0$;
    \item the term $-\frac{4}{3}\,{\frac {{a}^{5}}{{n}^{12}}}$ is always larger than $-1/(1000000n)$.
\end{itemize}
By summing all these lower bounds, we find that for $0 \leq a \leq 2/3$ and $n \geq 3$, $S(n,a)$ is always larger than $2790439/(1000000n)$. 
\end{proof}

Let us now raise some remarks, that are direct consequences of Lemma~\ref{lemma:n:u}.

\begin{remark}
\label{rem:rounding-downwards}
Assume $n \leq \sqrt{2/3} \cdot{} 2^{p/2}$. If for some $k \leq n$, we have $\RN(x \cdot \widehat{x}_{k-1}) \leq x \cdot \widehat{x}_{k-1}$ (i.e., if in Algorithm~\ref{alg:naive-power} at least one rounding is done downwards), then $\widehat{x}_n \leq (1+(n-1) \cdot u) x^n$.
\end{remark}
\begin{proof}
We have
    \[
    \widehat{x}_n \leq (1+u)^{n-2} x^n.
    \]
    Lemma \ref{lemma:n:u} implies that $(1+u)^{n-2}$ is less than $1+(n-1) \cdot u$. Therefore,   
    \[
    \widehat{x}_n \leq (1+(n-1) \cdot u) x^n.
    \]
\end{proof}

\begin{remark}
\label{rem:significand-large-enough}
Assume $n \leq \sqrt{2/3} \cdot{} 2^{p/2}$. If there exists $k$, $1 \leq k \leq n-1$, such that 
    $
    \overline{x \cdot \widehat{x}_k} \geq 1 + n^2 \cdot u,
    $
    then $\widehat{x}_n \leq (1+(n-1) \cdot u) x^n.$
\end{remark}
\begin{proof}
By combining Lemma~\ref{lemma:significand-above-w} and Lemma~\ref{lemma:n:u}, if there exists $k$, $1 \leq k \leq n-1$, such that 
    \[
    \overline{x \cdot \widehat{x}_k} \geq 1 + n^2 \cdot u,
    \]
    then 
    \[
    \widehat{x}_n \leq (1+u)^{n-2} \cdot \left(1 + \frac{u}{1+n^2u}\right)  \cdot x^n \leq (1 + (n-1) \cdot u) \cdot x^n.
    \]
\end{proof}

\section{Proof of Theorem~\ref{thm:main-theorem}} \label{sec:proof}

The proof is articulated as follows
\begin{itemize}
    \item first, we show that if $x$ is close enough to $1$, then when
      computing $\RN(x^2)$, the rounding is done downwards (i.e.,
      $\RN(x^2) \leq x^2$), which implies, from
      Remark~\ref{rem:rounding-downwards}, that $\widehat{x}_n \leq
      (1+(n-1) \cdot u) x^n$. This is the purpose of
      Lemma~\ref{lemma:smallx}. 
    \item then, we show that in the other cases, there is at least one
      $k \leq n-1$ such that $\overline{x \cdot \widehat{x}_k} \geq 1
      + n^2 \cdot u$, which implies, from
      Remark~\ref{rem:significand-large-enough}, that $\widehat{x}_n
      \leq (1+(n-1) \cdot u) x^n$. 
\end{itemize}

\begin{lemma}\label{lemma:smallx}
Let $x = 1 + k \cdot 2^{-p+1} = 1 + 2ku, k \in \mathbb{N}$ (all FP numbers between $1$ and $2$ are of that form). We have $x^2 = 1 + 2k \cdot{} 2^{-p+1} + k^2 \cdot 2^{-2p+2}$, so that if $k < 2^{p/2-1}$, i.e., if $ 1 \leq x < 1 + 2^{p/2}u$, then $\widehat{x}_2 = 1 +2 k \cdot 2^{-p+1} < x^2$, which, by  Remark~\ref{rem:rounding-downwards}, implies $\widehat{x}_n \leq (1 + (n-1) u) \cdot x^n$.
\end{lemma}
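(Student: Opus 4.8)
The statement of Lemma~\ref{lemma:smallx} essentially telegraphs its own proof, so the plan is just to supply the elementary verifications. First I would set $u = 2^{-p}$ and write $x = 1 + 2ku$ with $k \in \mathbb{N}$; expanding gives $x^2 = 1 + 4ku + 4k^2u^2 = 1 + (2k)\cdot 2^{-p+1} + k^2 \cdot 2^{-2p+2}$, which matches the formula in the statement. The case $k = 0$ (that is, $x = 1$) is trivial, since then $x^n = 1 = \widehat{x}_n$, so I may assume $k \ge 1$.

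Next I would locate $x^2$ among the floating-point numbers. Put $f_- = 1 + (2k)\cdot 2^{-p+1}$ and $f_+ = f_- + 2^{-p+1}$. Under the hypothesis $k < 2^{p/2-1}$ one has $x < 1 + 2^{p/2}u = 1 + 2^{-p/2}$, hence $x^2 < (1+2^{-p/2})^2 < 2$ for $p \ge 2$, so $x^2$ lies in the binade $[1,2)$, where consecutive floating-point numbers are spaced by $2^{-p+1} = 2u$ and where $f_-$ is representable (indeed $2k < 2^{p-1}$, with much room to spare). Thus the two floating-point neighbours of $x^2$ are exactly $f_-$ and $f_+$, and $x^2 - f_- = k^2 \cdot 2^{-2p+2} = 4k^2u^2$.

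Then I would compare this gap to the half-ulp $u = 2^{-p}$: from $k < 2^{p/2-1}$ we get $4k^2u^2 < 4\cdot 2^{p-2}\cdot u^2 = 2^p u^2 = u$. Hence $x^2 - f_- < u$, while $f_+ - x^2 = 2u - 4k^2u^2 > u > x^2 - f_-$, so $x^2$ is \emph{strictly} closer to $f_-$ than to $f_+$; no tie occurs, and therefore $\RN(x^2) = f_-$ for either round-to-nearest rule. Consequently $\widehat{x}_2 = f_- = 1 + 2k\cdot 2^{-p+1} \le x^2$, i.e.\ the first rounding in Algorithm~\ref{alg:naive-power} is performed downwards.

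Finally I would invoke Remark~\ref{rem:rounding-downwards}: its hypothesis $n \le \sqrt{2/3}\cdot 2^{p/2}$ follows from the standing assumption $n \le \sqrt{2^{1/2}-1}\cdot 2^{p/2}$ of Theorem~\ref{thm:main-theorem}, since $\sqrt{2^{1/2}-1} < \sqrt{2/3}$; the remark then yields $\widehat{x}_n \le (1+(n-1)\cdot u)\,x^n$, as claimed. There is no genuine obstacle in this lemma; the only points needing a moment's care are checking that $x^2$ has not crossed into the binade $[2,4)$ (so the ulp is what we expect) and noting that the bound $x^2 - f_- < u$ is \emph{strict}, which is precisely what makes the conclusion independent of the tie-breaking convention.
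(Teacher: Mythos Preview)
Your proof is correct and follows exactly the approach the paper sketches within the lemma's own statement: expand $x^2$, observe that the excess $k^2\cdot 2^{-2p+2}$ is strictly below half an ulp when $k < 2^{p/2-1}$, conclude $\RN(x^2)=1+2k\cdot 2^{-p+1}\le x^2$, and invoke Remark~\ref{rem:rounding-downwards}. You merely supply the details the paper leaves implicit---the binade check $x^2<2$, the no-tie observation, and the verification that the hypothesis $n\le\sqrt{2/3}\cdot 2^{p/2}$ of Remark~\ref{rem:rounding-downwards} follows from the standing assumption of Theorem~\ref{thm:main-theorem}.
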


%

Remark~\ref{rem:significand-large-enough} and
Lemma~\ref{lemma:smallx} imply that to prove
Theorem~\ref{thm:main-theorem}, we are reduced to examine the case
where $1 + 2^{p/2}u \leq x <2$
and we assume $u \leq 2/(3n^2)$, i.e., $n < \sqrt{2/3} \cdot{}
2^{p/2}$ (later on, we will see that a stronger assumption is
necessary). For that, we distinguish between the cases where  $x^2 \leq
1+n^2u$ and $x^2 > 1+n^2u$.

\subsection{First case: if $x^2 \leq 1+n^2u$}

From $x \geq 1 + 2^{p/2}u \geq 1 + nu$, we deduce
\[
x^n \geq (1 + nu)^n > 1+n^2u,
\]
so that, from Remark~\ref{rem:rounding-downwards}, we can assume that
\[
\widehat{x}_{n-1} \cdot{} x > (1+n^2u)
\]
(otherwise, at least one rounding was done downwards, which implies Theorem~\ref{thm:main-theorem}). Therefore
\begin{itemize}
   \item if $\widehat{x}_{n-1}x < 2$, then $\overline{\widehat{x}_{n-1}x}\geq (1+n^2u)$, so that, from Remark~\ref{rem:significand-large-enough}, $x^n \leq (1 + (n-1) \cdot u) \cdot x^n$;
   \item if $\widehat{x}_{n-1}x \geq 2$, then let $k$ be the smallest integer such that $\widehat{x}_{k-1}x \geq 2$. Notice that since we have assumed that $x^2 \leq 1+n^2u$, we necessarily have $k \geq 3$. We have
   \[
   \widehat{x}_{k-1} \geq \frac{2}{x} \geq \frac{2}{\sqrt{1+n^2u}},
   \]
   hence
  \begin{equation}
  \label{eq:boundonxk-2x}
   \widehat{x}_{k-2} \cdot x \geq \frac{2}{\sqrt{1+n^2u} \cdot (1+u)}.
   \end{equation}
   Now, define
   \[
   \alpha_p = \sqrt{\left(\frac{2^{p+1}}{2^p+1}\right)^{2/3}-1}.
   \]
   For all $p \geq 5$, $\alpha_p \geq \alpha_5 = 0.74509\cdots{}$, and $\alpha_p \leq \sqrt{2^{2/3}-1} = 0.7664209\cdots{}$.
   If
   \begin{equation}
   n \leq \alpha_p \cdot{} 2^{p/2},
   \end{equation}
   then
   \[
   1+n^2u \leq \left(\frac{2^{p+1}}{2^p+1}\right)^{2/3},
   \]
   so that
   \[
   (1+n^2u)^{3/2} \cdot (1+u) \leq 2,
   \]
   so that
   \[
   \frac{2}{\sqrt{1+n^2u} \cdot (1+u)} \geq 1+n^2 u.
   \]
   Therefore, from (\ref{eq:boundonxk-2x}), we have
   \[
    \widehat{x}_{k-2} \cdot x \geq 1+n^2 u.
    \]
    Also, $ \widehat{x}_{k-2} \cdot x$ is less than 2, since $k$ was assumed to be the smallest integer such that $\widehat{x}_{k-1}x \geq 2$. Therefore
     \[
    \overline{ \widehat{x}_{k-2} \cdot x } \geq 1+n^2 u.
    \]
   Which implies, by Remark~\ref{rem:significand-large-enough}, that $x^n \leq (1 + (n-1) \cdot u) \cdot x^n.$ So, to summarize this first case, if $x^2 \leq 1+n^2u$ and $n \leq \alpha_p \cdot{} 2^{p/2}$, then the conclusion of Theorem~\ref{thm:main-theorem} holds.
   \end{itemize}

\subsection{Second case: if $x^2 > 1+n^2u$}

First, if $x^2 < 2$ then we deduce from Remark~\ref{rem:significand-large-enough} that $x^n \leq (1 + (n-1) \cdot u) \cdot x^n$. The case $x^2 = 2$ is impossible ($x$ is a floating-point number, thus it cannot be irrational). Therefore let us now assume that $x^2 > 2$. We also assume that $x ^2 < 2 + 2n^2u$ (otherwise, we would have $\overline{ (x^2) }\geq 1 + n^2u$, so that we could apply Remark~\ref{rem:significand-large-enough}). Hence, we have
\[ \sqrt{2} < x < \sqrt{2+2n^2u}.\]
From this we deduce
\[
x^{n-1} < (2+2n^2u)^{\frac{n-1}{2}},
\]
therefore, using Theorem~\ref{thm:classical-theorem},
\[
\widehat{x}_{n-1} < (2+2n^2u)^{\frac{n-1}{2}} \cdot{} (1+u)^{n-2},
\]
which implies
\begin{equation}
\label{eq:bound-hat-xn-1xn}
x \cdot{} \widehat{x}_{n-1} < (2+2n^2u)^{n/2} \cdot{} (1+u)^{n-2}.
\end{equation}

Define
\[
\beta = \sqrt{2^{1/3}-1} = 0.5098245285339\cdots{}
\]
If $n \leq \beta \cdot 2^{p/2}$ then $2+2n^2u \leq 2^{4/3}$, so that we find
\begin{equation}
\label{eq:bound-on-the-bound-hat-xn}
(2+2n^2u)^{n/2} \cdot{} (1+u)^{n-2} \leq 2^{2n/3} \cdot (1+u)^{n-2}.
\end{equation}
\begin{itemize}
     \item if $n=3$, the bound on $x \cdot{} \widehat{x}_{n-1}$ derived from (\ref{eq:bound-hat-xn-1xn}) and~(\ref{eq:bound-on-the-bound-hat-xn}) is equal to $4 \cdot{} (1+u)$. Therefore either  $x \cdot{} \widehat{x}_{n-1} < 4$, or $x \cdot{} \widehat{x}_{n-1}$ will be rounded downwards when computing $\widehat{x}_n$ (in which case we already know from Remark~\ref{rem:rounding-downwards} that the conclusion of Theorem~\ref{thm:main-theorem} holds);
     \item if $n \geq 4$, consider function
     \[
     g(t) = 2^{t-1} - 2^{2t/3}\left(1 + \frac{1}{2^p}\right)^{t-2} = 2^{2t/3} \left[2^{t/3-1} - \left(1 + \frac{1}{2^p}\right)^{t-2}\right].
     \]
     It is a continuous function, and it goes to $+\infty$ as $t \to +\infty$. We have:
     \[
     g(t) = 0 \Leftrightarrow t = \frac{\log(2) + 2 \log \left(1 + \frac{1}{2^p}\right)}{\frac{1}{3}\log(2) - \log \left(1 + \frac{1}{2^p}\right)}.
     \]
     Hence, function $g$ has one root only, and as soon as $p \geq 5$, that root is strictly less than $4$. From this, we deduce that if $p \geq 5$, then $g(t) > 0$ for all $t \geq 4$. Hence, using (\ref{eq:bound-hat-xn-1xn}) and~(\ref{eq:bound-on-the-bound-hat-xn}), we deduce that if $p \geq 5$ then $x \cdot{} \widehat{x}_{n-1} < 2^{n-1}$.
\end{itemize}
Now that we have shown that\footnote{Unless $n=3$ and $x \cdot{} \widehat{x}_{n-1} \geq 4$ but in that case we have seen that the conclusion of Theorem~\ref{thm:main-theorem} holds.} if $n \leq \beta \cdot{} 2^{p/2}$ then
\[
x \cdot{} \widehat{x}_{n-1} < 2^{n-1},
\]
let us define $k$ as the smallest integer for which $x \cdot{} \widehat{x}_{k-1} < 2^{k-1}$. We now know that $k \leq n$, and (since we are assuming $x^2 > 2$), we have $k \geq 3$. The minimality of $k$ implies that $x \cdot{} \widehat{x}_{k-2} \geq 2^{k-2}$, which implies that $\widehat{x}_{k-1} = \RN(x \cdot{} \widehat{x}_{k-2})  \geq 2^{k-2}$. Therefore, $\widehat{x}_{k-1}$ and $x \cdot \widehat{x}_{k-1}$ belong to the same binade, therefore,
\begin{equation}
\label{eq:greater-than-sqrt-2}
\overline{x \cdot \widehat{x}_{k-1}} \geq x > \sqrt{2}.
\end{equation}
The constraint $n \leq \beta \cdot 2^{p/2}$ implies 
\begin{equation}
\label{less-than-cube-root-2}
1 + n^2 u \leq 1 + \beta^2 = 2^{1/3} < \sqrt{2}.
\end{equation}
By combining (\ref{eq:greater-than-sqrt-2}) and (\ref{less-than-cube-root-2}) we obtain
\[
\overline{x \cdot \widehat{x}_{k-1}} \geq 1 + n^2 u.
\]
Therefore, using Remark~\ref{rem:significand-large-enough}, we deduce that $\widehat{x}_n \leq (1 + (n-1) \cdot u) \cdot x^n$.

\subsection{Combining both cases}

One easily sees that for all $p \geq 5$, $\alpha_p$ is larger than $\beta$. Therefore, combining the conditions found in the cases $x^2 \leq 1 + n^2u$ and $x^2 > 1 + n^2u$, we deduce that if $p \geq 5$ and $n \leq \beta \cdot{} 2^{p/2}$, then for all $x$,
\[
(1 - (n-1) \cdot u) \cdot x^n \leq \widehat{x}_n \leq (1 + (n-1) \cdot u) \cdot x^n.
\]
Q.E.D.

Notice that the condition $n \leq \beta \cdot{} 2^{p/2}$ is not a huge constraint. The table below gives the maximum value of $n$ that satisfies that condition, for the various binary formats of the IEEE 754-2008 Standard for Floating-Point Arithmetic.

\[
\begin{array}{|r|l|}
\hline
p & n_\mathrm{max} \\
\hline\hline
24 & 2088 \\
\hline
53 & 48385542 \\
\hline
113 & 51953580258461959 \\
\hline
\end{array}
\]

For instance, in the binary32/single precision format, with the smallest $n$ larger than that maximum value (i.e., $2089$), $x^n$ will underflow as soon as $x \leq 0.95905406$ and overflow as soon as $x \geq 1.0433863$. In the binary64/double precision format, with $n = 4385543$, $x^n$ will underflow as soon as $x \leq 0.999985359$ and overflow as soon as $x \geq 1.000014669422$. With the binary113/quad precision format, the interval in which function $x^n$ does not under- or overflow is even narrower and, anyway, computing $x^{51953580258461959}$ by Algorithm~\ref{alg:naive-power} would at best require months of computation on current machines.

\section{Is the bound of Theorem~\ref{thm:main-theorem} tight?} \label{sec:thm}

For very small values of $p$, it is possible to check all possible
values of $x$ (we can assume $1 \leq x < 2$, so that we need to check
$2^{p-1}$ different values), using a Maple program that simulates a
precision-$p$ floating-point arithmetic. Hence, for small values of
$p$ and reasonable values of $n$ it is possible to compute the actual
maximum relative error of Algorithm~\ref{alg:naive-power}. For
instance, Tables~\ref{tab:p=8} and~\ref{tab:p=9} present the actual
maximum relative errors for $p = 8$ and $9$, respectively, and various
values of $n$. 

\begin{table}[!ht]
\caption{Actual maximum relative error of
  Algorithm~\ref{alg:naive-power} assuming precision $p = 8$, compared
  with the usual bound $\gamma_{n-1}$ and our bound $(n-1)u$. The term
  $n_{max}$ designs the largest value of $n$ for which
  Theorem~\ref{thm:main-theorem} holds, namely $\sqrt{2^{1/2}-1} \cdot
  2^{p/2}$} 
\label{tab:p=8}
\[
\begin{array}{|l|c|c|c|}
\hline
n & \mathrm{actual~maximum} & \gamma_{n-1} & \mathrm{our~bound} \\
\hline\hline
3 & 1.35988 u & 2.0157 u & 2u \\
\hline
4 & 1.73903 u & 3.0355 u & 3u \\
\hline
5 & 2.21152 u & 4.06349 u & 4u \\
\hline
6 & 2.53023 u & 5.099601 u & 5u \\
\hline
7 & 2.69634 u & 6.1440 u & 6u \\
\hline
8 = n_{max} & 3.42929 u & 7.1967 u & 7u \\
\hline
\end{array}
\]
\end{table}

\begin{table}[!ht]
\caption{Actual maximum relative error of
  Algorithm~\ref{alg:naive-power} assuming precision $p = 9$, compared
  with the usual bound $\gamma_{n-1}$ and our bound $(n-1)u$. The term
  $n_{max}$ designs the largest value of $n$ for which
  Theorem~\ref{thm:main-theorem} holds, namely $\sqrt{2^{1/2}-1} \cdot
  2^{p/2}$} 
\label{tab:p=9}
\[
\begin{array}{|l|c|c|c|}
\hline
n & \mathrm{actual~maximum} & \gamma_{n-1} & \mathrm{our~bound} \\
\hline\hline
6 & 2.677 u & 5.049 u & 5u \\
\hline
7 & 2.975 u & 6.071 u & 6u \\
\hline
8  & 3.435 u & 7.097 u & 7u \\
\hline
9 & 4.060 u & 8.1269 u & 8u \\
\hline
10 & 3.421 u & 9.1610 u & 9u \\
\hline
11 = n_{max} & 3.577 u &  10.199 u & 10u \\
\hline
\end{array}
\]
\end{table}

For larger values, we have some results (notice that beyond single precision---$p=24$---exhaustive testing is out of reach):

\begin{itemize}
  \item for single precision arithmetic ($p=24$) and $n=6$, the actual largest relative error is $4.328005619 u$. It is attained for $x = 8473808/2^{23} \approx 1.010156631$; 
     \item for double precision arithmetic ($p=53$) and $n=6$, although finding the actual largest relative error is out of reach, we could find an interesting case: for $x = 4507062722867963/2^{52} \approx 1.0007689616715527147761$, the relative error is $4.7805779\cdots{} u$
  \item for quad precision arithmetic ($p=113$) and $n=6$, although finding the actual largest relative error is out of reach, we could find an interesting case: for \[\begin{array}{c}
  x = 5192324351407105984705482084151108/2^{112} \\ \approx 1.0000052949345978099886352037496365983, \end{array} \] the relative error is $4.8827888\cdots{} u$
   \item for single precision arithmetic ($p=24$) and $n=10$, the actual largest relative error is $7.059603149 u$. It is attained for $x = 8429278/2^{23} \approx 1.004848242$;
   \item for double precision arithmetic ($p=53$) and $n=10$, although finding the actual largest relative error is out of reach, we could find an interesting case: for $x = 4503796447992526/2^{52} \approx 1.00004370295725975026$, the relative error is $7.9534189\cdots{} u$.
\end{itemize}

Notice that we can use the maximum relative error of single precision and ``inject it'' in the inductive reasoning that led to Theorem~\ref{thm:classical-theorem} to show that \emph{in single-precision arithmetic, and if $n \geq 10$} then
\[
(1-7.06u)(1-u)^{n-10} x^n \leq \widehat{x}_n \leq (1+7.06u)(1+u)^{n-10}x^n.
\]
Then, by replacing $u$ by $2^{-24}$ and through an elementary study of the function
\[
\varphi(t) = \left[(1+7.06 \cdot 2^{-24})(1+2^{-24})^{t-10}-1\right] \cdot 2^{24}-t
\]
one easily deduces that for $10 \leq n \leq 2088$, we always have
\[
\left| \frac{\widehat{x}_n - x^n}{x^n} \right| \leq (n-2.8104) \cdot u.
\]

\section{What about iterated products ?} \label{sec:prod}
\label{sec:iterated-products}
Assume now that, still in precision-$p$ binary FP arithmetic, we wish to evaluate the product
\[
a_1 \cdot{} a_2 \cdots{} \cdots{} \cdot a_n,
\]
of $n$ floating-point numbers. We assume that the product is evaluated as
\[
\RN( \cdots{} \RN(\RN(a_1 \cdot{} a_2) \cdot{} a_3) \cdot{} \cdots{} )
\cdot{} a_n) .
\]

Define $\pi_k$ as the exact value of $a_1 \cdots a_k$, and $\widehat{\pi}_k$ as the computed value. As already discussed in Section~\ref{sec:relative-error-classical-general}, ve have
\begin{equation}
(1 - u)^{n-1} \pi_n \leq \widehat{\pi}_n \leq (1+u)^{n-1} \pi_n,
\end{equation}
which implies that the relative error $|\pi_n - \widehat{\pi}_n|/\pi_n$ is upper-bounded by $\gamma_{n-1}$. We conjecture that the error is upper-bounded by $(n-1) u$. Let us now show how to build $a_1$, $a_2$, \ldots{}, $a_n$ so that the relative error becomes extremely close to $(n-1) \cdot u$. 

Define $a_1 = 1 + k_1 \cdot 2^{-p+1}$, and $a_2 = 1 + k_2 \cdot 2^{-p+1}$. We have
\[
\pi_2 = a_1 a_2 = 1 + (k_1+k_2) \cdot 2^{-p+1} + k_1k_2 \cdot 2^{-2p+2}.
\]
If $k_1$ and $k_2$ are not too large, $ 1 + (k_1+k_2) \cdot 2^{-p+1} $ is a FP number. To maximize the relative error, we wish $k_1+k_2$ to be as small as possible, while $k_1k_2 \cdot 2^{-2p+2}$ is as close as possible to $2^{-p}$. Hence a natural choice is
\[
k_1 = k_2 = \left\lfloor 2^{\frac{p}{2}-1} \right\rfloor,
\]
which gives $\widehat{\pi}_2 < \pi_2$.
Now, if at step $i-1$ we have
\[
\widehat{\pi}_i = 1 + g_i \cdot 2^{-p+1}, \mathrm{~with~} \widehat{\pi}_i < \pi_i,
\]
we choose $a_{i+1}$ of the form $1 + k_{i+1}2^{-p+1}$, with
\begin{itemize}
    \item $k_{i+1} = \left\lceil \frac{2^{p-2}}{g_i} - 1 \right\rceil$ if $g_i \leq 2^{\frac{p}{2}-1}$;
    \item $k_{i+1} = -\left\lfloor \frac{2^{p-2}}{g_i} + 1 \right\rfloor$ otherwise.
\end{itemize}

For instance, in single precision ($p=24$), the first values $a_i$ generated by this strategy are
\[
\begin{array}{lll}
a_1 &=& 4097/4096 \\
a_2 &=& 4097/4096 \\
a_3 &=& 8387583/8388608 \\
a_4 &=& 8387241/8388608 \\
a_5 &=& 262221/262144 \\
a_6 &=& 8387601/8388608 \\
a_7 &=& 8387279/8388608 \\
\end{array}
\]

Table~\ref{tab:built-iterated-products} gives examples of the relative
errors achieved with the values $a_i$ generated by this method, for
various values of $p$ and $n$. As one can easily see, the relative
error is always very close to, but less than $(n-1) \cdot u$.

\begin{table}[htb]
\caption{Relative errors achieved with the values $a_i$ generated by our method of Section~\ref{sec:iterated-products}.}
\label{tab:built-iterated-products}
\[
\begin{array}{|r|r|l|}
\hline
p & n & \mathrm{relative~error} \\
\hline\hline
24 & 10 & 8.99336984\cdots{}  u \\
\hline
24 & 100 &  98.9371972591\cdots{}  u \\
\hline\hline
53 & 10 & 8.99999972447 \cdots{} u \\
\hline
53 & 100 & 98.9999970091 \cdots{} u \\
\hline\hline
113 & 10 & 8.99999999999999973119 \cdots{} u \\
\hline
113 & 100 & 98.99999999999999701662 \cdots{} u \\
\hline
\end{array}
\]
\end{table}

\section{Conclusion}

We have shown that, under mild conditions, the relative error of the computation of $x^n$ in floating-point arithmetic using the ``naive'' algorithm is upper bounded by $(n-1) \cdot{} u$. This bound is simpler and slightly better than the previous bound. We conjecture that the same bound holds in the more general case of the computation of the product of $n$ floating-point numbers. In that case, we have provided examples that show that the actual error can be very close to $(n-1) \cdot u$.

%
%
%
\bibliographystyle{plain}


\end{document}